\DeclareMathOperator{\ind}{ind}
\DeclareMathOperator{\id}{id}
\newcommand{\define}[1]{\textbf{#1}}
\newcommand{\Ccal}{\mathcal{C}}
\newcommand{\Fcal}{\mathcal{F}}
\newcommand{\RR}{\mathbb{R}}
\newcommand{\SSS}{\mathbb{S}}
\newcommand{\CP}{\mathbb{CP}}
\newcommand{\Xfrak}{\mathfrak{X}}
\newtheorem{thm}{Theorem}
\newtheorem{lem}[thm]{Lemma}
\newtheorem{prop}[thm]{Proposition}
\theoremstyle{definition}
\newtheorem{defi}[thm]{Definition}
\begin{document}
\title{A generalized Poincar{\'e}-Hopf index theorem}
\author{Beno{\^{\i}}t Jubin}
\date{\today}
\address{Department of Mathematics\\ University of California\\ Berkeley, CA 94720}
\email{jubin@math.berkeley.edu}
\thanks{Research partially supported by NSF grant DMS-0707137.}
\subjclass[2000]{57R25, 58K45}
\begin{abstract}
We state and prove a generalization of the Poincar{\'e}-Hopf index theorem for manifolds with boundary. We then apply this result to non-vanishing complex vector fields.
\end{abstract}
\maketitle

The Poincar{\'e}-Hopf index theorem, which asserts the equality of the index of a vector field with isolated zeros on a compact manifold and the Euler-Poincar{\'e} characteristic of this manifold, requires, for manifolds with boundary, that the vector field point outwards at the boundary (see for instance~\cite{milnor}). In this article, we state and prove a generalization of this theorem which does not make such a requirement. This theorem is a slight generalization of the main theorem of~\cite{maw} (where the vector field is required to be tangent to the boundary), and the proof uses the same doubling technique. We then use this theorem to prove a result announced in~\cite{jacob} on complex vector fields and the Euler-Poincar{\'e} characteristic.

This generalization of the index theorem to manifolds with boundary deserves to be better known, and has been discovered at least four times in essentially equivalent forms. We include a survey of these prior results in the last section. Even if the present article does not contain new results, we think the more self-contained exposition of the result and the more detailed proofs, especially regarding genericity arguments, are still of interest and more accessible.

In the following, ``manifold'' means ``smooth manifold without boundary'', and ``$\partial$-manifold'' means ``smooth manifold with (possibly) boundary''. Also, we do not attach great importance to the differentiability degree of manifolds and vector fields; e.g. the extensions of vector fields in our proofs are only continuous, but it is easy to smoothen them. Actually, by standard smoothing techniques (see~\cite{hirsch}), the results hold for continuous vector fields on $\Ccal^1$-manifolds. Finally, we use the strong topology on the space of vector fields, which makes it a Baire space (see~\cite{hirsch} for details).

I would like to thank Rob Kirby for his motivating questions and comments and Alan Weinstein for his help in simplifying the proofs of several results.

\section{The generalized Poincar{\'e}-Hopf index theorem}

To state the generalized Poincar{\'e}-Hopf index theorem, we need to define the index of a vector field with isolated zeros on a $\partial$-manifold. To set notations, let $v$ be a vector field with isolated zeros on the $\partial$-manifold $M$ of dimension $n$. We denote by $Z(u)$ the set of zeros of a vector field $u$.

For zeros away from the boundary, this is done in the usual way. That is, the index $\ind(v,m)$ of the vector field $v$ at a zero $m \in M$ is defined to be the degree of the self-map of $\SSS^{n-1}$ obtained by normalizing the vector field image of $v$ by a chart, restricted to a small sphere around the image of $m$. By convention, the degree of the self-map of the empty set is 1. If $v$ has a finite number of zeros, in particular if $M$ is compact, then the \define{index of $v$ on the interior of $M$}, denoted by $\ind_\circ v$, is the sum of the indices of $v$ at its zeros in the interior of $M$.

The index on the boundary is defined as follows. First, we begin with a general definition, and we will keep the same notations in the rest of the article.

\begin{defi}[Collar]
A \define{collar} of a $\partial$-manifold $M$ is a diffeomorphism $\phi=(\phi_1,\phi_2)$ from an open neighborhood $U$ of $\partial M$ to the product $\partial$-manifold $\partial M \times \RR_+$ such that $\phi_1|_{\partial M}=\id_{\partial M}$.
\end{defi}

Note that a collar necessarily satisfies $\phi(\partial M)=\partial M \times \{0\}$. For a proof of the next theorem, we refer to~\cite{hirsch}.

\begin{thm}[Collaring theorem]
Every $\partial$-manifold has a collar.
\end{thm}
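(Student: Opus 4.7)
The plan is to construct the collar as the (reparametrized) time-flow of an inward-pointing vector field on $M$.

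First, I would build a smooth vector field $X$, defined on an open neighborhood of $\partial M$ in $M$, that points strictly into $M$ at every boundary point. In any boundary chart modelled on the closed half-space $\RR^{n-1} \times \RR_+$, the coordinate vector $\partial/\partial x_n$ is inward-pointing; since positive linear combinations of inward-pointing vectors are again inward-pointing, gluing such local choices by a smooth partition of unity subordinate to an atlas of boundary charts yields a global $X$ with the required transversality.

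Second, I would consider the forward flow $\psi_t$ of $X$. For each $p \in \partial M$ the trajectory $\psi_t(p)$ exists in $M$ for small $t \ge 0$, so the assignment $\Phi(p,t) = \psi_t(p)$ defines a smooth map from an open neighborhood $W$ of $\partial M \times \{0\}$ in $\partial M \times \RR_+$ into $M$. At every $(p, 0)$, the differential $d\Phi$ restricts to the identity on $T_p(\partial M)$ and sends $\partial/\partial t$ to $X(p) \notin T_p(\partial M)$; hence $d\Phi|_{(p,0)}$ is a linear isomorphism, so by the inverse function theorem $\Phi$ is a local diffeomorphism at each such point.

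The main obstacle is promoting this local diffeomorphism to a global one defined on a tube of the form $T_\delta = \{(p,t) : 0 \le t < \delta(p)\}$ for some smooth positive function $\delta$ on $\partial M$. If $\partial M$ is compact this is immediate: a finite subcover of charts on which $\Phi$ is a diffeomorphism, together with a Lebesgue-number argument, produces a uniform $\delta > 0$ on which $\Phi$ is globally injective. In general, one uses paracompactness: take a locally finite open cover $\{U_\alpha\}$ of $\partial M$ and positive numbers $\epsilon_\alpha$ such that $\Phi$ is a diffeomorphism on $U_\alpha \times [0, \epsilon_\alpha)$, then patch these via a subordinate partition of unity to obtain a smooth $\delta$ and verify global injectivity on $T_\delta$ by examining overlaps. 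Finally, composing $\Phi|_{T_\delta}$ with a smooth fibrewise diffeomorphism such as $(p, s) \mapsto (p, s/(\delta(p) - s))$, which maps $T_\delta$ onto $\partial M \times \RR_+$, produces the desired collar.
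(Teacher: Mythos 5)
The paper does not prove this theorem itself but defers to Hirsch, and your argument---an inward-pointing vector field built by a partition of unity, its flow giving a local diffeomorphism near $\partial M \times \{0\}$, shrinking to a tube $T_\delta$ on which it is a genuine diffeomorphism, then reparametrizing the fibres onto $\RR_+$---is precisely the standard proof given there, so it is correct and takes the same approach as the paper's reference. The only step you should spell out more carefully in the non-compact case is the passage from local injectivity plus injectivity on $\partial M \times \{0\}$ to injectivity on all of $T_\delta$, which requires a local-finiteness argument rather than just ``examining overlaps,'' but this is a standard lemma and not a gap in the strategy.
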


Since $\partial M$ is locally compact, the sets
\begin{equation*}
U^f = \{ m \in M \mid \phi_2(m) < f(\phi_1(m)) \}
\end{equation*}
form a neighborhood basis of $\partial M$ when $f$ describes the set $\Fcal = \{ f : \partial M \to \RR_+^* \mid f \mbox{ smooth}\}$ of positive smooth functions on $\partial M$.

If $v$ is a vector field on a $\partial$-manifold $M$ with a collar $\phi$, we can define on the domain $U$ of $\phi$ its \define{tangential} and \define{transverse components} with respect to $\phi$ by
\begin{equation*}
v_\parallel=T\phi_1 \circ v: U \to T(\partial M)
\quad\mbox{and}\qquad
v_\perp=T\phi_2 \circ v: U \to \RR
\end{equation*}
respectively (where for the latter we used the standard identification $T\RR_+ \simeq \RR_+\times~\RR$).

A vector field is called \define{0-transverse} if it is transversal to the zero section of the tangent bundle.

\begin{defi}[Tame vector field]
A vector field on a $\partial$-manifold $M$ with a collaring $\phi$ is \define{$\phi$-tame} (resp. \define{$\phi$-t-tame}) if it has no zero on $\partial M$, and its tangential component with respect to $\phi$ restricted to $\partial M$ has isolated zeros (resp. is 0-transverse).\end{defi}

Since a 0-transverse vector field has isolated zeros, t-tame vector fields are tame. Because $\partial$-manifolds are normal spaces, tame vector fields do not vanish on a neighborhood of the boundary of a $\partial$-manifold. We will also need the following result, a proof of which can be found in~\cite{hirsch} (and for compact manifolds in~\cite{abrah}).

\begin{prop}\label{trans}
The 0-transverse vector fields on a $\partial$-manifold form an open and dense subset of the set of vector fields on this $\partial$-manifold.

\end{prop}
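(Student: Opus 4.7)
The plan is to prove openness and density separately, using the fact that a vector field $v$ is 0-transverse if and only if at every zero $p \in Z(v)$ the intrinsic linearization $D_p v \colon T_p M \to T_p M$ is an isomorphism; in particular, zeros of 0-transverse vector fields are isolated and nondegenerate. (Here ``intrinsic'' means computed in any local trivialization of $TM$ near $p$; the result is basis-independent precisely because $v(p)=0$.)

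For openness, suppose $v$ is 0-transverse. Since $Z(v)$ is discrete and closed, choose precompact chart neighborhoods $W_p$ around each $p \in Z(v)$ (taken inside a collar chart when $p \in \partial M$) forming a locally finite family, with $v$ nonvanishing on the complement of $\bigcup_p W_p$. The implicit function theorem applied to $v$ read in a trivialization of $TM$ over $W_p$ yields a $C^1$-tolerance $\epsilon_p > 0$ such that every $v'$ with $\|v' - v\|_{C^1(\overline{W_p})} < \epsilon_p$ has a unique zero in $W_p$ with invertible linearization. On a locally finite refinement $\{V_j\}$ of the complement of $\bigcup_p W_p$, the norm $|v|$ is bounded below on each $\overline{V_j}$, so a $C^0$-tolerance $\delta_j > 0$ forbids $v'$ from vanishing there. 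This locally finite tolerance data is exactly a strong-topology neighborhood of $v$, and it consists entirely of 0-transverse vector fields.

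For density, fix a vector field $v$ and a strong-topology neighborhood $\mathcal{N}$ of $v$. Choose a locally finite cover $\{U_\alpha\}_{\alpha \in A}$ of $M$ by charts (of half-space type at the boundary) that trivialize $TM$, together with bump functions $\rho_\alpha$ equal to $1$ on a smaller cover $\{U'_\alpha\}$. In the chart $U_\alpha$, the vector field becomes a smooth map $v_\alpha \colon U_\alpha \to \RR^n$, and Sard's theorem gives a dense set of small $c \in \RR^n$ for which $0$ is a regular value of $v_\alpha - c$. Proceed by induction on a well-ordering of $A$: replace the current vector field by itself minus $\rho_\alpha c_\alpha$ for such a $c_\alpha$, achieving 0-transversality on $U'_\alpha$. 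Choose $c_\alpha$ small enough that the cumulative perturbation remains in $\mathcal{N}$ and that the 0-transversality already obtained on $\bigcup_{\beta < \alpha} U'_\beta$ is preserved; this is possible because 0-transversality is $C^1$-open by the openness argument.

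The main difficulty is in the density induction, where one must control two things at once: closeness in the strong topology, and the preservation of an open but only $C^1$-open condition on a growing region. Local finiteness of $\{U_\alpha\}$ is essential here, ensuring that only finitely many perturbations ever touch any given point, so the shrinking of successive $c_\alpha$'s does not accumulate badly. A slicker alternative is to apply Thom's parametric transversality theorem to the evaluation map on a sufficiently rich parameter space of compactly supported sections of $TM$, as in Hirsch; but the partition-of-unity argument sketched above keeps the proof elementary and self-contained.
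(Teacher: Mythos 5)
The paper does not prove this proposition at all: it simply cites Hirsch (and Abraham--Robbin for the compact case), so your proposal is supplying an argument where the paper gives only a reference. What you write is the standard ``elementary'' proof of openness and density of transversality to the zero section, and it is essentially correct: the reduction to invertibility of the intrinsic linearization at each zero is right (and is exactly why 0-transverse vector fields have isolated, nondegenerate zeros, the fact the paper actually uses); the openness argument via locally finite tolerance data is the correct way to handle the strong topology on a non-compact $\partial$-manifold; and the density argument via bump functions, Sard applied chart by chart, and an induction in which each successive constant $c_\alpha$ is shrunk to preserve the ($C^1$-open, compactly tested) transversality already achieved is the classical partition-of-unity proof, with the essential role of local finiteness correctly identified. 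Two points are compressed and worth making explicit if you write this up: the implicit function theorem alone does not give a \emph{uniform} $C^1$-tolerance valid for all nearby $v'$ --- you need the quantitative version (a lower bound $|v(x)|\geq c\,|x-p|$ near $p$ together with uniform invertibility of $Dv'$ on a fixed ball, via the mean value inequality, to get existence and uniqueness of the perturbed zero); and the sets $\overline{V_j}$ on which you impose the $C^0$ lower bound must be chosen to avoid $Z(v)$ outright (cover $M\setminus\bigcup_p W'_p$ for slightly shrunken $W'_p\Subset W_p$), since a $V_j$ merely contained in the complement of $\bigcup_p W_p$ could still have closure meeting a zero. Also note that on a second countable $\partial$-manifold the locally finite atlas is countable, so the ``well-ordering'' is just an induction over $\mathbb{N}$. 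Compared with the route the paper points to (Hirsch's parametric transversality machinery), your argument buys self-containedness at the cost of the bookkeeping you describe; both are valid.
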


With these definitions, we can state the following proposition.

\begin{prop}
The $\phi$-t-tame vector fields on a $\partial$-manifold $M$ with collar $\phi$ form an open subset of the set of vector fields, dense in each of the sets
\begin{equation*}
X_{w,f}=\{u \in \Xfrak(M) \mid u=w \mbox{ on } M \setminus U^f \mbox{ and } Z(u|_{U^f}) \subseteq \partial M\}
\end{equation*}
for any function $f \in \Fcal$ and vector field $w$ on $M$. In particular, this set is dense in the set of vector fields.

\end{prop}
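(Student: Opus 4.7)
The plan is to prove openness and density separately. For openness, I would observe that in the strong topology on $\Xfrak(M)$, both the restriction $u \mapsto u|_{\partial M}$ (as a section of $TM|_{\partial M}$) and the tangential projection $u \mapsto u_\parallel|_{\partial M} \in \Xfrak(\partial M)$ depend continuously on $u$. The condition ``$u|_{\partial M}$ is nowhere zero'' is an open condition on sections, and ``$u_\parallel|_{\partial M}$ is 0-transverse'' is open by Proposition~\ref{trans} applied to the boundaryless manifold $\partial M$. The set of $\phi$-t-tame vector fields is the intersection of the preimages of these two open conditions, hence open.

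For density in $X_{w,f}$, fix $u \in X_{w,f}$ and a strong neighborhood $\mathcal{V}$ of $u$. The idea is to perturb $u$ only inside some $U^g$ with $\overline{U^g} \subset U^f$. First, I apply Proposition~\ref{trans} on $\partial M$ itself to produce a 0-transverse $v_\parallel^0 \in \Xfrak(\partial M)$ close to $u_\parallel|_{\partial M}$; its zeros are isolated. Then I add a small locally supported bump to $u_\perp|_{\partial M}$ at those zeros of $v_\parallel^0$ where $u_\perp$ also vanishes, producing a function $v_\perp^0$ close to $u_\perp|_{\partial M}$ such that $(v_\parallel^0, v_\perp^0)$ is nowhere zero on $\partial M$. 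Next, I choose a smooth cutoff $\chi$ on $M$ equal to $1$ on $\partial M$ and supported in $U^g$, and define
\[
v(m, t) = u(m, t) + \chi(m, t)\bigl((v_\parallel^0(m), v_\perp^0(m)) - u(m, 0)\bigr)
\]
in collar coordinates on $U^g$, extended by $v = u$ elsewhere. By construction $v = u = w$ outside $U^f$, $v|_{\partial M} = (v_\parallel^0, v_\perp^0)$ is nowhere zero, and $v_\parallel|_{\partial M} = v_\parallel^0$ is 0-transverse.

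The main obstacle is to verify that $v$ has no zeros in $U^g \setminus \partial M$, which together with the previous points will place $v$ in $X_{w,f}$ and render it $\phi$-t-tame. While $u$ is nonzero on $U^f \setminus \partial M$ by the definition of $X_{w,f}$, it may tend to zero as one approaches $\partial M$, so a uniformly-small perturbation does not suffice. This is precisely where the strong topology is essential: it lets me bound the perturbation size pointwise by a positive continuous function on $M$, and by taking $g$ small enough together with $(v_\parallel^0, v_\perp^0)$ sufficiently close to $u|_{\partial M}$ on $\partial M$, the continuity of $u$ in the collar direction forces $v(m, t)$ to remain close to the nowhere-vanishing boundary value $(v_\parallel^0(m), v_\perp^0(m))$ throughout $U^g$. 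This simultaneously places $v$ in $\mathcal{V} \cap X_{w,f}$ and makes it $\phi$-t-tame.
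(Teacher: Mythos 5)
Your openness argument and the overall shape of your density argument (perturb the boundary data, then cut off inside the collar) are in the spirit of the paper's proof, but there is a genuine gap at exactly the step you single out as ``the main obstacle,'' and the justification you give for it is incorrect. The claim that $v(m,t)$ ``remains close to the nowhere-vanishing boundary value $(v_\parallel^0(m),v_\perp^0(m))$ throughout $U^g$'' is false: your $v(m,t)=u(m,t)+\chi(m,t)\delta(m)$ interpolates between $v^0(m)$ at $t=0$ and $u(m,t)$ where $\chi$ has died out, and near a boundary zero of $u$ the latter is itself arbitrarily small, so no choice of ``$g$ small and $v^0$ close to $u|_{\partial M}$'' keeps $v$ near $v^0$ on all of $U^g$. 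Concretely, suppose in collar coordinates $u(m_0,t)=(0,-t)$ near a point $m_0$ with $u(m_0,0)=0$ and $v_\parallel^0(m_0)=0$, and suppose your transverse bump gives $v_\perp^0(m_0)=\epsilon>0$. Then $v(m_0,t)=(0,\chi(m_0,t)\epsilon-t)$, which is positive in the second slot at $t=0$ and negative once $\chi$ vanishes, so the intermediate value theorem produces a new zero in $U^g\setminus\partial M$. Shrinking $g$ or $\epsilon$ only relocates this zero. The sign of the bump therefore matters and must be correlated with the behaviour of $u_\perp(m,t)$ for small $t>0$, which your construction never examines; moreover the new interior zero would take $v$ out of $X_{w,f}$, and it is density of t-tame fields \emph{inside} $X_{w,f}$ that the later propositions (constancy of the boundary index on $X_{w,f}$, and the definition of $\ind_\partial$) actually require.

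This is precisely the difficulty the paper's two-step proof is built to avoid. It first removes the boundary zeros non-perturbatively, by an inward shift $(\psi_*v_1)(x)=(\psi_*v)(x_1,\dots,x_{n-1},\frac{1+x_n}{2})$ in a half-ball chart around each boundary zero: the new values are old values of $v$ at interior points, which are nonzero because $Z(v|_{U^f})\subseteq\partial M$, so no smallness or sign analysis is needed and no new zeros can appear. Only after this step, when the field is nowhere zero on a neighbourhood of $\partial M$ and hence (by local compactness) has norm locally bounded below by positive constants, does it perform the cutoff interpolation of the tangential component with a $0$-transverse field --- and there the ``a sufficiently small perturbation creates no zeros'' argument is legitimate. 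To repair your proof you would either need to import such a shift as a preliminary step, or justify a coherent choice of sign for $v_\perp^0$ at each common zero of $v_\parallel^0$ and $u|_{\partial M}$ compatible with the direction of $u$ just inside the collar, which is a more delicate matter than your sketch acknowledges.
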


\begin{proof}
The conditions for t-tameness are open (transversality with a closed submanifold is an open condition, ibid.). We have to prove that they are dense.

First for having no zero on the boundary, that is, we have to ``remove'' the zeros on $\partial M$ of a vector field $v \in X_{w,f}$ without introducing new zeros nor modifying $v$ on $M \setminus U^f$, through an arbitrarily small deformation. This can be done as follows: each zero on the boundary has a neighborhood, included in $U^f$ and arbitrarily small, diffeomorphic through a chart $\psi$ to the half ball $\{x \in \RR^n \mid \|x\|<1 \wedge x_n \geq 0 \}$ and where $v$ does not vanish. We define the continuous vector field $v_1$ on $M$ by $(\psi_*v_1)(x)=(\psi_*v)(x_1,\ldots,x_{n-1},\frac{1+x_n}2)$ on this neighborhood and $v_1=v$ elsewhere. The facts that the neighborhood can be arbitrarily small and that $M$ is locally compact imply that $v_1$ can be chosen arbitrarily close to $v$.

Now for having a tangential component 0-transverse on $\partial M$. By Proposition~\ref{trans}, there is a vector field $w$ on $\partial M$ with isolated zeros and arbitrarily close to $v_\parallel$. We define $v_2$ on $M$ by $v_2=v$ on $M \setminus U^f$ and $\phi_* v_2 = (v_\perp, h v_\parallel + (1-h) w)$ on $U^f$, where $h:\partial M \times \RR_+ \to [0,1]$ is a smooth function such that $h(x,t)=0$ if $t\geq f(x)$ and $h(x,0)=1$. For instance, we can define $h(x,t)=\exp\left(\frac{t}{f(x)(t-f(x))}\right)$ when $t<f(x)$. If $w$ is chosen close enough to $v_\parallel$ then $v_2$ does not vanish. Indeed, because $\partial M$ is locally compact, $v$ has a norm (for any Riemannian metric) locally bounded below by a positive number.

The second statement of the proposition follows from the first since any vector field $u$ with isolated zeros is in a set $X_{u,f}$ for some $f \in \Fcal$ (since $\partial$-manifolds are normal spaces) and vector fields with isolated zeros form a dense subset of the set of vector fields by Proposition~\ref{trans}.
\end{proof}

Now we can define the index of a tame vector field on a $\partial$-manifold with collar at a zero of its tangential component on the boundary.

\begin{defi}[Index on the boundary]
The \textbf{index} of a $\phi$-tame vector field on a $\partial$-manifold with collar $\phi$ at a zero of its tangential component on the boundary is defined to be half the index of its tangential component restricted to the boundary if the vector field points inwards, and minus half it if the vector field points outwards.
\end{defi}

The minus sign in the definition is explained by the following lemma.

\begin{lem}\label{lem1}
If $a$ is a vector field on an open subset $U$ of $\RR^n$ with an isolated zero at $p \in U$, then the index at $(p,0)$ of the vector field $b$ on $U \times (-1,1)$ given by $b(x,t)=(a(x),-t)$, is $\ind(b,(p,0)) = -\ind(a,p)$.
\end{lem}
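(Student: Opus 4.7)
My plan is to reduce the lemma to the case of a nondegenerate zero, where the index is just the sign of the Jacobian determinant, and then invoke a direct linear-algebra calculation.

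First, I would use Proposition~\ref{trans} to perturb $a$ inside a small neighborhood $W$ of $p$ (away from the boundary of $W$) to a nearby vector field $\tilde a$ which is $0$-transverse. Since the index is defined via the degree of the normalized vector field on a small sphere around $p$, and such a degree is homotopy-invariant under deformations that keep the vector field nowhere zero on that sphere, we get
\begin{equation*}
\ind(a,p) \;=\; \sum_{q \in Z(\tilde a) \cap W} \ind(\tilde a,q),
\end{equation*}
and each $q$ is a nondegenerate zero, so $\ind(\tilde a,q) = \operatorname{sgn}\det D\tilde a(q)$.

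Next, I would transport this perturbation to $b$ by setting $\tilde b(x,t)=(\tilde a(x),-t)$. Since $\tilde b$ is $C^0$-close to $b$ on $U\times(-1,1)$, the same homotopy-invariance argument applied to a small sphere around $(p,0)$ gives
\begin{equation*}
\ind(b,(p,0)) \;=\; \sum_{q} \ind(\tilde b,(q,0)),
\end{equation*}
the sum being over zeros of $\tilde b$ in $W\times(-\delta,\delta)$, which are precisely the points $(q,0)$ for $q\in Z(\tilde a)\cap W$. At each such zero the Jacobian is block-diagonal,
\begin{equation*}
D\tilde b(q,0) \;=\; \begin{pmatrix} D\tilde a(q) & 0 \\ 0 & -1 \end{pmatrix},
\end{equation*}
so $\det D\tilde b(q,0) = -\det D\tilde a(q)$ and thus $\ind(\tilde b,(q,0))=-\ind(\tilde a,q)$.

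Combining the two displays gives $\ind(b,(p,0))=-\ind(a,p)$, as desired. The main subtlety is the first step: one must know that a small $C^0$ perturbation preserves the total index in a neighborhood. This is standard and follows from the fact that if $a$ and $\tilde a$ are close enough, then $(1-s)a+s\tilde a$ does not vanish on the boundary sphere of $W$, giving a straight-line homotopy of the normalized maps $S^{n-1}\to S^{n-1}$ on that sphere; the same homotopy applied coordinatewise to $b$ and $\tilde b$ works on the boundary sphere of $W\times(-\delta,\delta)$, since the $-t$ component is unchanged and nonzero away from $t=0$.
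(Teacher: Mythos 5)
Your proof is correct, but it takes a genuinely different route from the paper's. The paper disposes of the lemma in one line: writing $b(x,t)=(a(x),-t)$ as the reflection $(y,s)\mapsto(y,-s)$ composed with $(x,t)\mapsto(a(x),t)$, it uses that the degree of the normalized map on a small sphere is multiplicative under composition, that the reflection in a hyperplane has degree $-1$, and (implicitly) that the normalized map of $(a(x),t)$ is a suspension of that of $a$ and hence has degree $\ind(a,p)$. You instead reduce to the nondegenerate case: perturb $a$ to a $0$-transverse $\tilde a$ agreeing with $a$ near $\partial W$, invoke stability of the total index under homotopies nonvanishing on the boundary of a compact region, and read off the sign change from the block-diagonal Jacobian $\det D\tilde b(q,0)=-\det D\tilde a(q)$. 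Both are sound. The paper's argument is shorter and applies directly to merely continuous $a$; yours is more computational but self-contained modulo two standard facts (density of $0$-transverse fields, i.e.\ Proposition~\ref{trans}, and the identity $\ind(\tilde a,q)=\operatorname{sgn}\det D\tilde a(q)$ at a nondegenerate zero), and it makes the origin of the minus sign completely concrete. Two small points of care in your write-up: $\overline{W}\times[-\delta,\delta]$ is a box rather than a ball, so ``boundary sphere'' should be read as its topological boundary (on the side walls the first component equals $a\neq 0$ throughout the homotopy, and on the top and bottom the last component is $\mp\delta\neq 0$); and you should note that $Z(\tilde a)\cap W$ is finite because it is contained in a compact subset of $W$ and consists of isolated points by $0$-transversality. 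Neither affects correctness.
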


\begin{proof}
The degree of maps is a morphism (it transforms composition of maps into multiplication of integers) and the reflection with respect to an hyperplane is a self-map of the sphere with degree $-1$.
\end{proof}

If the tangential component has a finite number of zeros on the boundary, in particular if the boundary is compact, then the \define{index on the boundary} of the tame vector field is defined to be the sum of its indices at the zeros of its tangential component on the boundary. If the $\partial$-manifold is compact, this is an integer (by the doubling process explained in the proof of the main theorem below), but need not be an integer if the boundary only is compact. For instance, a non-vanishing vector field on $\RR_+$ has an index on the boundary equal to $\pm 1/2$. As another example, a vector field with isolated zeros on $\CP^2 \times \RR_+$, $\id$-tame and with constant second component, has an index on the boundary equal to $\pm 3/2$, since $\chi(\CP^2)=3$. Notice that this gives another proof that null-cobordant manifolds (through an oriented cobordism) have even Euler-Poincar{\'e} characteristic.

To extend this definition to any vector field with isolated zeros, we need a few more results.

\begin{prop}\label{constInd}
In a $\partial$-manifold with compact boundary, the index on the boundary of a $\phi$-tame vector field is constant in each set $X_{w,f}$.
\end{prop}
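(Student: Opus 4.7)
The strategy is to reduce to the classical Poincar\'e--Hopf theorem on a closed manifold via a doubling construction: form $DM = M \cup_{\partial M} M'$ by gluing two copies of $M$ along their common boundary, extend each $\phi$-tame $v \in X_{w,f}$ to a continuous vector field $Dv$ on $DM$ with isolated zeros, and apply classical Poincar\'e--Hopf to compute $\ind_\circ(Dv) = \chi(DM)$.

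To construct $Dv$, first modify $v$ in the collar using a smooth cutoff $\sigma : \RR_+ \to [0,1]$ with $\sigma(0) = 0$, $\sigma(t) = t$ for small $t \geq 0$, and $\sigma \equiv 1$ outside a neighborhood of $0$: replace $v_\perp(x,t)$ by $\sigma(t)\, v_\perp(x,t)$ on $U$. Since $v$ is nowhere zero on $U$ (tameness together with $Z(v|_{U^f}) \subseteq \partial M$) and $\sigma(t)v_\perp$ has the same sign as $v_\perp$ for $t > 0$, no new zeros appear; on $\partial M$, the modified transverse component vanishes. Gluing this modified field to its reflection on $M'$ (tangential unchanged, transverse negated on the $M'$-side of the bicollar) yields a continuous $Dv$ on $DM$, whose only zeros are (i) each interior zero of $v$ in $M$, duplicated as a zero in $M'$, and (ii) the zeros of $v_\parallel|_{\partial M}$ on $\partial M$. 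In bicollar coordinates near a zero $p$ of type (ii), the local form is $Dv(x,t) = (v_\parallel(x, 0),\, v_\perp(p)\, t) + \text{h.o.t.}$; combining Lemma~\ref{lem1} with the multiplicativity of the degree under products gives $\ind(Dv, p) = \operatorname{sign}(v_\perp(p)) \cdot \ind(v_\parallel|_{\partial M}, p) = 2\,\ind_\partial(v, p)$, while interior zeros of $Dv$ keep their original indices.

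Summing and applying classical Poincar\'e--Hopf on the closed manifold $DM$,
\[
\chi(DM) \;=\; \ind_\circ(Dv) \;=\; 2\,\ind_\circ(v) + 2\,\ind_\partial(v),
\]
so $\ind_\partial(v) = \tfrac{1}{2}\chi(DM) - \ind_\circ(v)$. It then suffices to observe that $\ind_\circ(v)$ is constant on $X_{w,f}$: the interior zeros of any tame $v \in X_{w,f}$ lie in $M \setminus U^f$, where $v = w$, and their indices agree with those of $w$ (directly for zeros in the interior of $M \setminus U^f$, or via an excision argument using $v = w$ on $\partial U^f$ for zeros lying on that hypersurface). The main obstacle is the doubling construction itself: verifying that the index of $Dv$ at a boundary zero is exactly $2\,\ind_\partial(v, p)$, since this factor of $2$, matching the $\tfrac{1}{2}$ in the definition of the boundary index, is what makes the doubling give the clean identity above.
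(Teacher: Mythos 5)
There is a genuine gap, and it is the hypothesis you are quietly strengthening: the proposition assumes only that $\partial M$ is compact, not that $M$ is. Your argument applies the classical Poincar\'e--Hopf theorem to the closed double $DM$, which requires $M$ (hence $DM$) to be compact, and it also needs $\ind_\circ(v)$ to be a well-defined finite sum, which can fail when $w$ has infinitely many (or non-isolated) zeros on the non-compact set $M\setminus U^f$. The non-compact case is not a corner case here: this proposition is exactly what lets the paper define $\ind_\partial$ for vector fields on non-compact $\partial$-manifolds with compact boundary, and the examples $\RR_+$ and $\CP^2\times\RR_+$ (with $\ind_\partial=\pm\frac12$ and $\pm\frac32$) show there is no closed double whose Euler characteristic could control the answer. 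The paper's proof avoids this entirely by working only on $\partial M\times[0,1]$: it builds a homotopy inside $X_{w,f}$ between two t-tame fields by retracting the collar onto $\partial U^f$, perturbs the resulting map $H(m,t)={v_t}_\parallel(m)$ to be $0$-transverse, and uses the compact one-dimensional cobordism $H^{-1}(0)$ to pair up boundary zeros with cancelling or equal contributions. That argument never mentions $\ind_\circ$ and never needs $M$ compact.

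Second, even when $M$ is compact, your reduction shifts the entire difficulty onto the claim that $\ind_\circ$ is constant on $X_{w,f}$, and the one clause you offer for it (``an excision argument using $v=w$ on $\partial U^f$'') does not go through. If $w$ vanishes at a point $p$ of the hypersurface $\partial U^f$ --- which the definition of $X_{w,f}$ permits, since $\partial U^f\subseteq M\setminus U^f$ --- then every small sphere around $p$ meets $U^f$, where two elements of $X_{w,f}$ may differ subject only to being nonvanishing; the local degree of $v/\|v\|$ on that sphere is therefore not visibly determined by $w$, and there is nothing to excise. Controlling this requires a homotopy or cobordism argument of exactly the kind the paper runs for the boundary zeros, at which point you may as well run that argument for $\ind_\partial$ directly. (Your computation $\ind(Dv,p)=2\ind_\partial(v,p)$ at the boundary zeros is correct, and is essentially the paper's proof of the main theorem; it just lands on the wrong statement.)
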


\begin{proof}
Consider two $\phi$-t-tame vector fields $v_0$ and $v_1$ in $X_{w,f}$. They are homotopic within $X_{w,f}$ since $\partial U^f$ (where the symbol $\partial$ is used in its general topology meaning) is a deformation retract of $U^f$. Indeed, we define a homotopy by $v_t = \lambda^{2t}_*v_0$ if $t\in [0,\frac12]$ and $v_t = \lambda^{2(1-t)}_*v_1$ if $t\in [\frac12,1]$, where the deformation $\lambda^t: U^f \to U^f$ is defined by
\begin{equation*}
\phi \circ \lambda^t = (\phi_1,(1-t)\phi_2 + t f)
\end{equation*}
for any $t \in [0,1]$.

Now consider the map  $H: \partial M \times [0,1] \to T(\partial M)$ defined by $H(m,t)={v_t}_\parallel(m)$. Up to a small perturbation with support disjoint from the boundary of $\partial M \times [0,1]$, we can suppose it is 0-transverse, so that $H^{-1}(0)$ is a compact one-dimensional $\partial$-manifold, neat as a submanifold of $\partial M \times [0,1]$ (by 0-transversality of ${v_i}_\parallel$, $i\in \{0,1\}$).

Consider a connected component $W$ of $H^{-1}(0)$ with non-empty boundary $\partial W=\{a_1,a_2\}$, with $a_1\neq a_2$, $a_k \in \partial M \times \{i_k\}$, $i_k \in \{0,1\}$. We consider a small hollow tube $N$ around $W$, whose boundary consists of two $(n-2)$-dimensional spheres, around $a_1$ and $a_2$. By a standard result (\cite[Lemma~1.2 p.~123]{hirsch}), and since $N$ is compact, the degree of $H/\|H\|$ (for any Riemannian metric) on $\partial N$ is zero. Therefore, if $i_1 = i_2$, then $\ind ({v_{i_1}}_\parallel,a_1)+\ind ({v_{i_2}}_\parallel,a_2)=0$, and if $i_1 \neq i_2$, then $\ind ({v_{i_1}}_\parallel,a_1)=\ind ({v_{i_2}}_\parallel,a_2)$ (because of the orientation reversal). But on $W$, ${v_t}_\parallel$ vanishes, so that ${v_t}_\perp$ has to keep a constant sign, hence $\ind (v_{i_1},a_1)+\ind (v_{i_2},a_2)=0$ or $\ind (v_{i_1},a_1)=\ind (v_{i_2},a_2)$ respectively. Therefore, grouping the boundary points of $H^{-1}(0)$ into pairs, we get $\ind_\partial v_0=\ind_\partial v_1$.

Finally, a $\phi$-tame vector field can be approximated by a $\phi$-t-tame one by a standard construction consisting in exploding a degenerate zero into non-degenerate ones, and the degree being a topological invariant, the indices on the boundary of both vector fields are the same.
\end{proof}

We now have to prove that the index on the boundary is independent of the collaring. This follows from the next two propositions.

\begin{prop}
For any two collars and any set $X_{w,f}$, the set of vector fields which are t-tame for both collars is dense in $X_{w,f}$.
\end{prop}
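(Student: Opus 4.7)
The strategy is to invoke the preceding proposition twice, once for each collar, and to combine the two approximations by exploiting the openness of t-tameness. Let $\phi$ and $\psi$ denote the two collars, write $U^f_\phi$ and $U^{f'}_\psi$ for the associated neighborhoods of $\partial M$, and take $v \in X_{w,f}$, where $X_{w,f}$ is defined via $\phi$. First apply the preceding proposition with the collar $\phi$ to produce $v_1 \in X_{w,f}$, arbitrarily close to $v$, which is $\phi$-t-tame.

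The next step is to further approximate $v_1$ by a vector field that is also $\psi$-t-tame. Since the sets $U^{f'}_\psi$ form a neighborhood basis of $\partial M$, one can choose $f' \in \Fcal$ with $U^{f'}_\psi \subseteq U^f_\phi$. Then $v_1$ lies in the analog $X'_{v_1, f'}$ of $X_{w,f}$ defined via $\psi$ (with $v_1$ in place of $w$ and $f'$ in place of $f$): trivially $v_1 = v_1$ off $U^{f'}_\psi$, and any zero of $v_1$ inside $U^{f'}_\psi \subseteq U^f_\phi$ lies on $\partial M$ by $v_1 \in X_{w,f}$. Applying the preceding proposition to $\psi$ yields $v_2 \in X'_{v_1, f'}$ which is $\psi$-t-tame and arbitrarily close to $v_1$.

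One then verifies that $v_2 \in X_{w,f}$. Off $U^{f'}_\psi$ we have $v_2 = v_1$, and since $M \setminus U^f_\phi \subseteq M \setminus U^{f'}_\psi$ it follows that $v_2 = v_1 = w$ on $M \setminus U^f_\phi$. For the zero condition, $v_2$ has no zero on $\partial M$ by $\psi$-t-tameness; the zeros of $v_2$ in $U^{f'}_\psi$ must lie on $\partial M$ by definition of $X'_{v_1, f'}$ and hence there are none; and any zero of $v_2 = v_1$ in $U^f_\phi \setminus U^{f'}_\psi$ would have to lie on $\partial M \subseteq U^{f'}_\psi$, a contradiction. Hence $Z(v_2|_{U^f}) \subseteq \partial M$ holds vacuously.

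To conclude, one invokes the openness of $\phi$-t-tameness noted at the start of the preceding proof: if $v_2$ is chosen sufficiently close to $v_1$, it remains $\phi$-t-tame, so $v_2$ is t-tame for both collars, belongs to $X_{w,f}$, and is arbitrarily close to $v$. The main technical point is the bookkeeping made possible by the nesting $U^{f'}_\psi \subseteq U^f_\phi$, which guarantees that the second perturbation, performed inside $U^{f'}_\psi$ with respect to $\psi$, does not violate the $X_{w,f}$ condition defined with respect to $\phi$.
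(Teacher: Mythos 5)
Your proof is correct and is essentially an unfolding of the paper's one-line argument, which simply observes that the intersection of two open dense sets is dense (density of the first set gives $v_1$; openness of the first set plus density of the second gives $v_2$). Your extra bookkeeping with the nesting $U^{f'}_\psi \subseteq U^f_\phi$ usefully makes explicit why the $\psi$-t-tame fields are dense in the $\phi$-defined set $X_{w,f}$, and not merely in the analogous sets defined via $\psi$ as the preceding proposition literally states --- a point the paper's proof glosses over.
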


\begin{proof}
This is simply because the intersection of two open dense sets is dense.
\end{proof}

\begin{prop}
A vector field on a $\partial$-manifold with compact boundary which is tame for two collars has the same index on the boundary for both collars.
\end{prop}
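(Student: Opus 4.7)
My strategy is to mirror the proof of Proposition~\ref{constInd}, now varying the collar instead of the vector field. First I would reduce to the case where $v$ is t-tame for both $\phi$ and $\phi'$: by the preceding proposition, the set of vector fields which are t-tame for both collars is dense in the intersection of the corresponding sets $X_{v,f}\cap X_{v,f'}$ (one set for each collar), and by two applications of Proposition~\ref{constInd} the boundary indices with respect to either collar are unchanged by such an approximation. So we may assume $v$ is t-tame for both collars.

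Next I would connect $\phi$ to $\phi'$ by a smooth family of collars $\phi^s$, $s\in[0,1]$. Such a family is obtained by taking a convex combination of the inward-pointing vector fields of the two collars on $\partial M$ (the inward half-space of each $T_m M$ being convex), extending smoothly to a neighborhood, and using the short-time flow. By compactness of $\partial M$, all $\phi^s$ can be arranged to be defined on a common neighborhood $U$ of $\partial M$ on which $v$ is nowhere zero. I would then define
\begin{equation*}
H:\partial M\times[0,1]\to T(\partial M),\qquad H(m,s)=T\phi_1^s\circ v(m),
\end{equation*}
which is $0$-transverse on $\partial M\times\{0,1\}$ by the t-tameness assumption. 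After a small generic perturbation of the family $\phi^s$ supported away from $s=0,1$, I would assume $H$ is $0$-transverse everywhere, so that $H^{-1}(0)$ is a compact one-dimensional neat submanifold of $\partial M\times[0,1]$ whose boundary in $\partial M\times\{0,1\}$ is exactly the zero set of $v_\parallel$ with respect to $\phi$ at $s=0$ and with respect to $\phi'$ at $s=1$.

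Along each connected component of $H^{-1}(0)$, parametrized as a curve $(m_s,s)$, the vector $v(m_s)$ is a nonzero multiple of the inward collar vector $n^{\phi^s}(m_s)$, so the inward/outward sign is continuous, hence constant, along the component. The tube-degree pairing of boundary points exactly as in Proposition~\ref{constInd} then identifies the signed sums of indices at $s=0$ and $s=1$, yielding the desired equality of boundary indices. The main obstacle will be the transversality step: one must check that varying $\phi^s$ alone (with fixed endpoints) provides enough freedom to achieve generic $0$-transversality of $H$. This holds because at a zero $(m_0,s_0)$, an infinitesimal variation of $\phi^s$ near $s_0$ changes $T\phi_1^s v(m_0)$ by an arbitrary vector in $T_{m_0}\partial M$ (since $v(m_0)\neq 0$), so the standard Baire-space transversality argument applies.
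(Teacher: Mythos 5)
Your proof is correct, and its overall skeleton is the same as the paper's: reduce to a vector field that is t-tame for both collars, produce a homotopy between the two tangential components that behaves well on $\partial M$, and then run the cobordism argument of Proposition~\ref{constInd} (pair up the boundary points of $H^{-1}(0)$, using constancy of the transverse sign along each component). The difference lies in how the homotopy is built. The paper takes the straight-line homotopy $v^t = t\,v^\phi + (1-t)\,v^\psi$ between the pushed-forward fields $v^\phi = T\phi\circ v$ and $v^\psi = T\psi\circ v$, and verifies non-vanishing by the one-line computation $v^\psi = (v^\phi_1 + v^\phi_2 w,\ \alpha v^\phi_2)$ with $\alpha>0$; no intermediate collars are ever constructed. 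You instead interpolate the collars themselves, by convex combination of their inward vector fields followed by the flow. This is more geometric, but it costs more: you must construct the family $\phi^s$ on a common domain, and you must supply a parametric transversality argument for the resulting $H$, an obligation the paper avoids because its homotopy is already a map $\partial M\times[0,1]\to T(\partial M)$ of exactly the kind Proposition~\ref{constInd} perturbs. You correctly identify and discharge that obligation: at a zero of $H$ the vector $v(m_0)$ is a nonzero multiple of the inward direction, so varying that direction sweeps out all of $T_{m_0}\partial M$. One fact you use implicitly and should record explicitly: at a boundary point $m$, the projection $T_m\phi_1$ is determined by being the identity on $T_m\partial M$ with kernel the inward line of the collar, and the sign of $v_\perp(m)$ is independent of the collar; this is what guarantees that $H(\cdot,0)$ and $H(\cdot,1)$ compute the boundary indices for $\phi$ and $\phi'$ even though the flow-generated collars $\phi^0$ and $\phi^1$ are not literally $\phi$ and $\phi'$, only collars with the same inward directions along $\partial M$.
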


\begin{proof}
Let $\phi$ and $\psi$ be two collars of $M$ and $v$ be a vector field which is t-tame for both (we can assume this for the same reason as in the proof of Proposition~\ref{constInd}). To apply the same argument as in the proof of Proposition~\ref{constInd}, we need to show that $v^\phi = T\phi \circ v$ and $v^\psi = T\psi \circ v$ are homotopic on $\partial M \times \{0\}$ through non-vanishing vector fields. We will actually show that the convex combination $v^t=tv^\phi + (1-t) v^\psi$ does not vanish. Since collars agree on the boundary, $v^\psi = (v^\phi_1 + v^\phi_2 w, \alpha v^\phi_2)$ on $\partial M$, where $w \in \Xfrak(\partial M)$ and $\alpha \in \Fcal$, so that
\begin{equation*}
v^t=(v^\phi_1 + (1-t) v^\phi_2 w, \left( t +  (1-t) \alpha \right) v^\phi_2).
\end{equation*}
If at a point of the boundary $v^t_2=0$, then $v^\phi_2=0$, so $v^t_1=v^\phi_1 \neq 0$.
\end{proof}

These results enable us to define the index on the boundary of a vector field $v$ with isolated zeros on a $\partial$-manifold $M$ with compact boundary. Indeed, $v$ is in a set $X_{v,f}$ for some $f \in \Fcal$, and we define its index on the boundary $\ind_\partial v$ to be the index on the boundary of any $\phi$-tame vector field in this set for any collar $\phi$ of $M$.

Finally, the index of a vector field with isolated zeros on a $\partial$-manifold is defined to be the sum of its indices on the interior and on the boundary:
\begin{equation*}
\ind v = \ind_\circ v + \ind_\partial v.
\end{equation*}

Using Lemma~\ref{lem1}, the following proposition is clear.

\begin{prop}
Let $v$ be a vector field with isolated zeros on a compact oriented $\partial$-manifold $M$. Then $\ind_\circ (-v) = (-1)^{\dim M} \ind_\circ v$ and $\ind_\partial (-v) = (-1)^{\dim M} \ind_\partial v$, so that
\begin{equation*}
\ind (-v) = (-1)^{\dim M} \ind v.
\end{equation*}
\end{prop}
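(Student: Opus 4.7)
The plan is to treat the interior and boundary contributions separately, since both definitions of index are built out of the degree of a sphere self-map and the only question is how negation of the vector field affects that degree.

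For the interior part, at each zero $m$ in the interior of $M$, I would work in a chart and observe that $-v/\|{-v}\|$ equals the composition of $v/\|v\|$ with the antipodal map $A : \SSS^{n-1} \to \SSS^{n-1}$. Since $A$ is the composition of $n$ coordinate reflections, Lemma~\ref{lem1} applied $n$ times (once per coordinate, treating each coordinate in turn as the ``transverse'' direction of that lemma) gives $\deg(A) = (-1)^n$. Multiplicativity of the degree under composition yields $\ind(-v,m) = (-1)^n \ind(v,m)$, and summing over the finitely many interior zeros gives $\ind_\circ(-v) = (-1)^n \ind_\circ v$.

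For the boundary part, by the definition of $\ind_\partial$ and Proposition~\ref{constInd}, I may replace $v$ by a $\phi$-t-tame vector field in the same set $X_{v,f}$ (the same approximation serves for $-v$, which is automatically $\phi$-t-tame whenever $v$ is). At a zero $p$ of $v_\parallel$ on $\partial M$, passing from $v$ to $-v$ produces two sign changes. First, $(-v)_\parallel = -v_\parallel$ is a vector field on the $(n-1)$-manifold $\partial M$, so by the interior case already handled, applied in dimension $n-1$, we get $\ind((-v)_\parallel, p) = (-1)^{n-1} \ind(v_\parallel, p)$. Second, $(-v)_\perp = -v_\perp$, so $-v$ points inward at $p$ exactly when $v$ points outward, which flips the $\pm\tfrac12$ sign in the definition of the index on the boundary. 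Combining,
\begin{equation*}
\ind_\partial(-v, p) = -(-1)^{n-1} \ind_\partial(v,p) = (-1)^n \ind_\partial(v,p).
\end{equation*}
Summing over the zeros of $v_\parallel$ on $\partial M$ gives $\ind_\partial(-v) = (-1)^n \ind_\partial v$, and adding to the interior identity yields the statement.

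There is no serious obstacle here; the only thing requiring a moment of care is not to forget the second sign change in the boundary case (the inward/outward flip), which would otherwise give the wrong parity $(-1)^{n-1}$ instead of $(-1)^n$. The reduction to the t-tame case is immediate from the well-definedness of $\ind_\partial$ established earlier.
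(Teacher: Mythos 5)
Your proof is correct and fills in exactly the computation the paper leaves implicit (the paper merely states that the proposition is ``clear'' from Lemma~\ref{lem1}): the antipodal map has degree $(-1)^n$, handling the interior zeros, and at a boundary zero the two sign changes --- the $(-1)^{n-1}$ from negating the tangential component on the $(n-1)$-dimensional boundary and the $-1$ from the inward/outward flip --- combine to $(-1)^n$. The only cosmetic quibble is that the fact you need for the interior case is the one stated in the \emph{proof} of Lemma~\ref{lem1} (a hyperplane reflection of the sphere has degree $-1$ and degree is multiplicative), rather than the statement of the lemma itself, which concerns a product vector field.
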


With the above definition, we can state the generalized Poincar{\'e}-Hopf index theorem.

\begin{thm}
Let $v$ be a vector field with isolated zeros on the compact oriented $\partial$-manifold $M$. Then
\begin{equation*}
\ind v =
\begin{cases}
\chi(M)	& \mbox{if $\dim M$ is even,}\\
0			& \mbox{if $\dim M$ is odd.}
\end{cases}
\end{equation*}
\end{thm}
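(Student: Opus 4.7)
The plan is to reduce to the classical Poincaré--Hopf theorem on the closed manifold $DM := M \cup_{\partial M} M$ obtained by doubling. By the density statements above and Proposition~\ref{constInd}, I may assume $v$ is $\phi$-t-tame, so that $v$ is nonvanishing on some closed collar $\overline{U^f}$ and the zeros of $v_\parallel$ on $\partial M$ are non-degenerate. On $U^f$ I replace $v$ by $\tilde v := (v_\parallel, \eta(\phi_2) v_\perp)$, where $\eta:\RR_+ \to [0,1]$ is smooth with $\eta(0)=0$, $\eta'(0)>0$, and $\eta \equiv 1$ near $\{\phi_2 \geq f\}$, leaving $\tilde v = v$ elsewhere. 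Since $v$ does not vanish on $U^f$, the zeros of $\tilde v$ are exactly the interior zeros of $v$ together with the zeros of $v_\parallel$ on $\partial M$. Because $\tilde v_\perp$ vanishes on $\partial M$, doubling $\tilde v$ by reflection across $\partial M$ (tangential component unchanged, transverse component flipped in sign) produces a continuous vector field $D\tilde v$ on $DM$, smoothable by standard techniques.

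The zeros of $D\tilde v$ split into two types. Each interior zero $m$ of $v$ gives rise to two zeros of $D\tilde v$ (one in each copy of $M$), and a direct Jacobian computation shows the mirror zero also has index $\ind(v,m)$. Each boundary zero $p$ of $v_\parallel$ on $\partial M$ gives a single zero of $D\tilde v$ lying on $\partial M \subset DM$. For the local index at such a $p$, set $A := d_p v_\parallel$ (invertible by 0-transversality) and $c := v_\perp(p) \neq 0$. Using the expansions $\tilde v_\parallel(x,|t|) = A(x-p) + O(\|x-p\|^2+|t|)$ and $\tilde v_\perp(x,|t|) = \eta'(0)\, c\, |t| + O(\cdots)$, a straight-line homotopy reduces $D\tilde v$ near $p$ to the linear model $(y,t) \mapsto (A y,\, \eta'(0)\, c\, t)$ defined on both signs of $t$, whose index equals $\mathrm{sign}(c) \cdot \ind(v_\parallel, p)$ by multiplicativity of the degree (or Lemma~\ref{lem1}). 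By the sign convention of the boundary-index definition, $\mathrm{sign}(c) = +1$ when $v$ points inwards and $-1$ when it points outwards, so this is precisely $2 \ind_\partial v(p)$. Summing over all zeros, $\ind(D\tilde v) = 2 \ind_\circ v + 2 \ind_\partial v = 2 \ind v$.

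Applying classical Poincaré--Hopf on the closed manifold $DM$ gives $\ind(D\tilde v) = \chi(DM)$, and Mayer--Vietoris yields $\chi(DM) = 2\chi(M) - \chi(\partial M)$. If $\dim M$ is even, then $\partial M$ is closed and odd-dimensional so $\chi(\partial M)=0$, giving $\ind v = \chi(M)$. If $\dim M$ is odd, then $DM$ is closed and odd-dimensional so $\chi(DM)=0$, giving $\ind v = 0$. The main delicate point is the local index computation at the boundary zeros: a naive doubling of the unmodified $v$ is discontinuous because $v_\perp \neq 0$ there, so the preliminary modification to $\tilde v$ is essential, and the factor $\mathrm{sign}(v_\perp)$ produced by the antisymmetric extension of the transverse component is exactly what reproduces the $\pm\frac12$ in the definition of $\ind_\partial v$.
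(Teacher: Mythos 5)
Your proof is correct and takes essentially the same approach as the paper: reduce to a $\phi$-t-tame field, arrange for the transverse component to vanish on $\partial M$ so the field doubles continuously, identify the index of the doubled field as $2\ind_\circ v + 2\ind_\partial v$ via the linear model of Lemma~\ref{lem1}, and conclude with the classical Poincar{\'e}--Hopf theorem and $\chi$ of the double $= 2\chi(M)-\chi(\partial M)$. The only cosmetic difference is that you damp $v_\perp$ inside the existing collar with the cutoff $\eta(\phi_2)$, whereas the paper attaches an external collar $\partial M \times [-1,0]$ on which the transverse component is interpolated linearly to zero; the two devices are interchangeable.
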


\begin{proof}

We fix a collar $\phi$ for $M$ and suppose without loss of generality that $v$ is $\phi$-tame. We extend the $\partial$-manifold $M$ to $\tilde{M}=M \cup \partial M \times [-1,0]$, with identification along $\id \times \{-1\}:\partial M \to \partial M \times \{-1\}$, and the vector field $v$ to a continuous vector field $\tilde{v}$ on $\tilde{M}$ by $\tilde{v}(m,t)=v_\parallel(m) - t v_\perp(m)$, for any $m \in \partial M$ and $t \in [-1,0]$. Then we consider the double $\widetilde{M}$ of $\tilde{M}$ that we obtain by gluing $\tilde{M}$ to a copy of itself along the identity of its boundary. The vector field $\tilde{v}$ extends to $\widetilde{v}$, equal to $\tilde{v}$ on both copies of $\tilde{M}$. This can be done since $\tilde{v}$ is tangential on the boundary of $\tilde{M}$. The continuous vector field $\widetilde{v}$ has isolated zeros on $\widetilde{M}$ and its index is twice the index of $v$ on $M$. Indeed, $v$ being $\phi$-tame, the zeros of $\widetilde{v}$ are in the embedded images of $\partial \tilde{M}$ and of the two copies of $M$. The sum of the indices at the latter zeros is obviously $2\ind_\circ v$. For the former zeros, Lemma~\ref{lem1} applied to $\partial \tilde{M} \times (-1,1)$ and the definition of the index of $v$ on the boundary show that the sum of the indices at those zeros is $2\ind_\partial v$.

We now apply the Poincar{\'e}-Hopf index theorem to the vector field $\widetilde{v}$ on the compact manifold $\widetilde{M}$. The manifold $\widetilde{M}$ has the same dimension as $M$ and has no boundary, so if $\dim M$ is odd, $\chi(\widetilde{M})=0$, and if $\dim M$ is even, the manifold $\partial M$ has odd dimension and no boundary, so $\chi(\widetilde{M}) = 2 \chi(\tilde{M}) - \chi(\partial M) = 2 \chi(M)$. Since $\ind \widetilde{v} = 2\ind v$, the theorem is proved.
\end{proof}

Notice that we could have defined the index of a vector field with isolated zeros by means of a perturbation (so that it is tame on the boundary for a given collar) and extensions to $\tilde{M}$ and then to its double. The theorem would have proved that the index on the boundary was independent of the choices of the perturbation and extensions. Although the exposition above was longer, we think it is worth carrying out because it yields a more intrinsic definition of the index on the boundary (and does not require the $\partial$-manifold to be compact).

This theorem is a generalization of the Poincar{\'e}-Hopf index theorem for manifolds without boundary, since the Euler-Poincar{\'e} characteristic of an odd dimensional manifold without boundary is zero. Let us check that it also yields the Poincar{\'e}-Hopf index theorem for $\partial$-manifolds. That is, we have to check that if $v$ points outwards on $\partial M$, then $\ind_\partial v=0$ if $n$ is even and $\ind_\partial v=-\chi(M)$ if $n$ is odd. But our definition gives in this case $\ind_\partial v = -\frac12 \chi(\partial M)$, which is equal to 0 if $n$ is even and $-\chi(M)$ if $n$ is odd, as wanted.

As an example, let $v$ be a non-vanishing vector field on a manifold $M$. Consider the $\partial$-manifold $N=M \times [-1,1]$ and the vector field $w$ defined by $w(m,t)=(\cos \frac{\pi t}2 v(m),\sin \frac{\pi t}2)$ on $N$. It is non-vanishing on $N$ and points outwards on its boundary, so $\ind_\partial v = -\frac12 \chi(\partial M) = - \chi(M) = 0$.

As another example, let us consider two vector fields on the closed unit ball of $\RR^n$, a constant non-zero vector field and the radial vector field given by $v(x)=\|x\|x$. For the former, the index in the interior is 0. The tangential component on the boundary is a vector field going from one point to its antipode along great circles. At the point of divergence, the vector field points inwards, giving an index $1/2$, and at the point of convergence, the vector field points outwards, giving an index $(-1)^n/2$. Therefore, the index on the boundary is 0 when $n$ is odd and 1 when $n$ is even. For the latter, the index on the interior is the index at 0, which is 1 (the degree of the identity map of $\SSS^{n-1}$), and the index on the boundary is, from above, $-1$ when $n$ is odd and 0 when $n$ is even. So we see that while $\ind$ is an invariant of the manifold, neither $\ind_\circ$ nor $\ind_\partial$ need be.

\section{Complex vector fields and the Euler-Poincar{\'e} characteristic}

A \define{complex vector field} on a manifold is a section of the complexified tangent bundle. If the manifold has a Riemannian metric, then the \define{square norm} of a complex tangent vector is defined by 
\begin{equation*}
|\xi+i\eta|^2=|\xi|^2-|\eta|^2+2i\langle\xi,\eta\rangle.
\end{equation*}

The article~\cite{jacob} states the following theorem and gives an incomplete proof. Indeed, on page~2, line~18, the fact that the same alternative holds at all points of the region requires \textit{a priori} that the region be connected, which is not necessarily the case (see however the last paragraph of this section).

\begin{thm}
A compact oriented manifold bearing a complex vector field with non-vanishing square norm has Euler-Poincar{\'e} characteristic zero.
\end{thm}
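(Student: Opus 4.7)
The plan is to split $M$ into two compact oriented $\partial$-manifolds on each of which the real or imaginary part of $v=\xi+i\eta$ is nowhere vanishing, and then reduce to the main theorem. The hypothesis that $|v|^2 = (|\xi|^2-|\eta|^2) + 2i\langle\xi,\eta\rangle$ is nowhere zero means that at every point, either $|\xi|\neq|\eta|$ or $\langle\xi,\eta\rangle\neq 0$. Were $0$ a regular value of $g := |\xi|^2 - |\eta|^2$, the closed sets $M_\pm := g^{-1}(\RR_\pm)$ would be compact oriented $\partial$-manifolds (inheriting orientation from $M$) with common boundary $N := g^{-1}(0)$, and $\xi$ would not vanish on $M_+$: in the interior because $|\xi|^2 > |\eta|^2 \geq 0$, and on $\partial M_+$ because $|\xi|=|\eta|$ together with $\langle\xi,\eta\rangle\neq 0$ forces both to be nonzero. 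Symmetrically, $\eta$ would not vanish on $M_-$.

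To arrange this regularity, I replace $v$ by $e^{i\theta}v$ for a generic $\theta\in\SSS^1$. The hypothesis is preserved since $|e^{i\theta}v|^2 = e^{2i\theta}|v|^2$, and the new function becomes $g_\theta(p) = \operatorname{Re}\bigl(e^{2i\theta}|v(p)|^2\bigr)$. Consider the auxiliary smooth map $G:M\times\SSS^1\to\RR$ defined by $G(p,\theta)=g_\theta(p)$. A direct computation gives $\partial_\theta G = -2\operatorname{Im}\bigl(e^{2i\theta}|v|^2\bigr)$, which is nonzero whenever $G=0$ because $|v|^2\neq 0$. Hence $G^{-1}(0)$ is a smooth codimension-one submanifold of $M\times\SSS^1$, and by Sard's theorem applied to its projection onto $\SSS^1$, almost every $\theta$ makes $0$ a regular value of $g_\theta$. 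Choose such a $\theta$ and rotate $v$ accordingly; the decomposition $M=M_+\cup M_-$ with $M_+\cap M_-=N$ is now in hand.

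With the decomposition set up, apply the generalized Poincar\'e-Hopf theorem to the nowhere-vanishing $\xi|_{M_+}$: its index is zero, so when $\dim M$ is even we obtain $\chi(M_+)=0$, and likewise $\chi(M_-)=0$. The inclusion-exclusion formula $\chi(M)=\chi(M_+)+\chi(M_-)-\chi(N)$, valid because $N$ is a two-sided collared closed hypersurface, then yields $\chi(M)=-\chi(N)$, and $N$ being a closed manifold of odd dimension $\dim M - 1$ has $\chi(N)=0$; thus $\chi(M)=0$. When $\dim M$ is odd the conclusion is immediate, as closed oriented odd-dimensional manifolds have vanishing Euler characteristic. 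The main technical point is the genericity argument securing regularity of $g$; once that is settled the rest is a direct application of the main theorem, and no connectedness hypothesis on $M$ or on the regions $M_\pm$ is needed, which is precisely what plugs the gap in the original proof.
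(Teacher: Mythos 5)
Your setup is clean and the genericity argument is correct: rotating $v$ by $e^{i\theta}$ and applying parametric transversality to $G(p,\theta)=\operatorname{Re}\bigl(e^{2i\theta}|v(p)|^2\bigr)$ is a nice way to make $0$ a regular value of $g=|\xi|^2-|\eta|^2$, and your verification that $\xi$ is nowhere zero on $M_+$ and $\eta$ nowhere zero on $M_-$ is right. But the final step contains a genuine error that sinks the proof: from the fact that $\xi|_{M_+}$ is nowhere vanishing you conclude that ``its index is zero,'' hence $\chi(M_+)=0$. This is false. The generalized Poincar\'e--Hopf theorem gives $\ind_\circ \xi + \ind_\partial \xi = \chi(M_+)$, and while $\ind_\circ \xi=0$ for a nowhere-vanishing field, the boundary contribution $\ind_\partial \xi$ need not vanish: it is computed from the zeros of the \emph{tangential component} of $\xi$ along $\partial M_+$, which can exist even when $\xi$ itself never vanishes. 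The paper's own example makes this concrete: a constant nonzero vector field on the closed unit ball in $\RR^n$ is nowhere vanishing, yet has $\ind_\partial = 1$ for $n$ even, consistent with $\chi(B^n)=1\neq 0$. More plainly, every compact $\partial$-manifold whose boundary is nonempty in each component carries a nowhere-vanishing vector field, so ``nowhere vanishing $\Rightarrow$ $\chi=0$'' cannot hold for $\partial$-manifolds; that implication is exactly what the boundary index term is there to correct.

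What is missing is the actual crux of the argument: you must control $\ind_\partial$ on the interface $N$, and the only tool available is the remaining hypothesis $\langle\xi,\eta\rangle\neq 0$ on $N$, which you use only to show nonvanishing and then discard. The paper exploits it by perturbing $\xi$ in a collar of $N$ so that it agrees with $\pm\eta$ there (the sign chosen per component according to the sign of $\langle\xi,\eta\rangle$), then applies the generalized theorem to \emph{both} $\xi_1$ and $\eta$ on the same $\partial$-manifold (essentially $\overline{M_-}$): since $\eta$ is nowhere zero there, $\ind_\circ\eta=0$, and since $\xi_1=\pm\eta$ near $N$ one gets $\ind_\partial\xi_1=\ind_\partial\eta$ (using $\ind_\partial(-\eta)=(-1)^{\dim M}\ind_\partial\eta$ and $\dim M$ even); subtracting the two instances of the theorem yields $\ind_\circ\xi_1=0$ on $M_-$, and hence $\ind\xi_1=0$ on all of $M$ by the \emph{classical} Poincar\'e--Hopf theorem, giving $\chi(M)=0$. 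Your decomposition via the rotation trick could replace the paper's choice of the set $\Omega$, but without some version of this comparison of boundary indices of $\xi$ and $\eta$ across $N$, the argument does not close.
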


\begin{proof}
Let $v=\xi + i \eta$ be a complex vector field with non-vanishing square norm on the compact manifold $M$. If $\dim M$ is odd, there is nothing to prove, so suppose $\dim M$ is even. We can also suppose that $\xi$ has isolated zeros, since this is a generic property. We want to show that $\ind \xi =0$ (actually, a perturbation of $\xi$), since this will imply by the usual Poincar{\'e}-Hopf index theorem that $\chi(M)=0$.

Dealing separately with each connected component of $M$, we can assume that $M$ is connected. We partition $M$ into the three subsets $A_+ = \{m \in M \mid |\xi_m|>|\eta_m|\}$, $B = \{m \in M \mid |\xi_m|=|\eta_m|\}$ and $A_- = \{m \in M \mid |\xi_m|<|\eta_m|\}$. If $B$ is empty, then either $A_-$ or $A_+$ is empty, by the intermediate value theorem. Now $\xi$ does not vanish on $A_+ \cup B$ and $\eta$ does not vanish on $A_- \cup B$, so if $A_-$ or $A_+$ is empty, $M$ carries a non-vanishing vector field, so $\chi(M)=0$. So let us suppose that none of these three sets is empty.

There exists an open subset $\Omega$ of $M$ with smooth boundary such that $B \subset \Omega \subset \overline{\Omega} \subset \{m \in M \mid \langle \xi_m, \eta_m \rangle \neq 0\}$. Then $\overline{\Omega} \cap A_-$ is a $\partial$-manifold. We denote by $\Sigma_-$ its boundary. By the collaring theorem, there is a neighborhood $U$ of $\Sigma_-$ in $\overline{\Omega} \cap A_-$ diffeomorphic to $\Sigma_- \times [0,2)$. Let us perturb $\xi$ on this set in the following way: on each connected component of this set either $\langle \xi, \eta \rangle >0$ or $\langle \xi, - \eta \rangle >0$. So set $\xi_1(m,t)=(1-t(2-t))\xi \pm t(2-t)\eta$ for any $m \in \Sigma_-$ and $t \in [0,2)$, depending on the case occurring, and $\xi_1=\xi$ out of this set. Thus $\xi_1$ has the same zeros as $\xi$. Now, apply the generalized Poincar{\'e}-Hopf index theorem on $N = (A_- \setminus \Omega) \cup \Sigma_+ \times [0,1)$ to $\xi_1$ and $\eta$. We get
\begin{equation*}
\ind_\circ \xi_1 + \ind_\partial \xi_1 = \chi(N) = \ind_\circ \eta + \ind_\partial \eta.
\end{equation*}
Since $\eta$ does not vanish on $N$, $\ind_\circ \eta = 0$. On each component of $\partial N$, we have $\xi_1 = \pm \eta$, so $\ind_\partial \xi_1 = \ind_\partial \eta$ or $\ind_\partial \xi_1 = (-1)^{\dim M} \ind_\partial \eta = \ind_\partial \eta$, so that $\ind_\circ \xi_1=0$. Since $\xi_1$ does not vanish on $A_+ \cup \Omega$, $\ind \xi_1=0$ on $M$, so by the usual Poincar{\'e}-Hopf index theorem, $\chi(M)=0$.
\end{proof}

Actually, the proof in~\cite{jacob} can be fixed in the following way, as H. Jacobowitz noted. The argument of the original proof does not give a non-vanishing vector field but only one up to sign, so that it still defines a line field. But the existence of a line field on a compact oriented manifold is enough to imply that the manifold has Euler-Poincar\'e characteristic zero. Indeed, let $M$ be a compact oriented manifold and let $M'$ be the intersection (in $TM$) of the line field and the unit tangent bundle (for any Riemannian metric). Then $M'$ is a double cover of $M$, so it is compact and oriented, and it carries a non-vanishing vector field, namely the restriction of the tautological section of $T(TM)$. Therefore $M'$ has Euler-Poincar\'e characteristic zero and so does $M$.

\section{Similar generalizations of the Poincar{\'e}-Hopf index theorem}

The first extension of the Poincar{\'e}-Hopf index theorem for general vector fields on $\partial$-manifolds dates back to Morse (1929) who proved in~\cite{morse} (the result had been announced in~\cite{morse1}) that for any tame vector field $v$ on a compact oriented $\partial$-manifold $M$,
\begin{equation*}
\ind_\circ(v) + \ind(\partial_- v) = \chi(M)
\end{equation*}
where $\partial_- v$ is the tangential component of the vector field $v$ restricted to the open submanifold of $\partial M$ where $v$ points (strictly) inwards. Following Gottlieb, we call this identity the Morse index formula. This formula was rediscovered by Pugh (1968) in~\cite{pugh}, and then by Gottlieb (1986) in~\cite{gott1} and~\cite{gott2}, and generalized by Gottlieb and Samarayanake in~\cite{sama} for non-everywhere defined vector fields.

The Morse index formula is easily seen to be equivalent to the formula given in the present paper. Indeed, with obvious notations, one has
\begin{equation*}
\ind_\partial(v) = \frac12 \big(\ind(\partial_- v) - \ind(\partial_+ v)\big)
\end{equation*}
so that proving their equivalence amounts to proving that $\ind(\partial_+ v) + \ind(\partial_- v) = 2\chi(M)$ if $\dim M$ is odd and $0$ if $\dim M$ is even. But by the usual Poincar\'e-Hopf index theorem, and because $v$ has no zero on $\partial M$, the left-hand side is equal to $\chi(\partial M)$, which is precisely $2\chi(M)$ if $\dim M$ is odd and $0$ if $\dim M$ is even, as wanted.

\end{document}